\theoremstyle{definition} 
\newtheorem{lemma}{Lemma}
\newtheorem{remark}{Remark}
\newtheorem{definition}{Definition}
\newtheorem{example}{Example}
\newcommand{\RR}{\mathbb R}
\newcommand{\ZZ}{\mathbb Z}
\title{The number $\pi$ and summation by $SL(2,\mathbb Z)$}
\author{Nikita Kalinin\footnote{CINVESTAV, Mexico. Research is supported by the grant 168647 of the Swiss National Science Foundation.}, Mikhail Shkolnikov\footnote{University of Geneva, Switzerland. Research is supported in part the grant 159240 of the Swiss National Science
Foundation as well as by the National Center of Competence in Research
SwissMAP of the Swiss National Science Foundation.}}
\begin{document}
\maketitle

\medskip
The sum (resp. the sum of the squares) of the defects in the triangle inequalities for the area one lattice parallelograms in the first quadrant has a surprisingly simple expression.

Namely, let $f(a,b,c,d)=\sqrt{a^2+b^2}+\sqrt{c^2+d^2}-\sqrt{(a+c)^2+(b+d)^2}$. 
Then,
$$\sum f(a,b,c,d) = 2,$$
$$\sum f(a,b,c,d)^2 = 2-\pi/2,$$
where the sum runs by all $a,b,c,d\in\mathbb Z_{\geq 0}$ such that $ad-bc=1$.

This paper is devoted to the proof of these formulae. We also discuss possible directions in study of this phenomena.  

\section{History: geometric approach to $\pi$}

\setlength{\epigraphwidth}{.48\textwidth}
\epigraph{What good your beautiful proof on the transcendence of $\pi$: why investigate such problems, given that irrational numbers do not even exists?}{Apocryphally attributed to Leopold Kronecker by Ferdinand Lindemann}

Digit computation of $\pi$, probably, is one of the oldest research directions in mathematics. Due to Archimedes we may consider the inscribed and superscribed equilateral polygons for the unit circle. Let $p_n$ (resp., $P_n$) be the perimeter of such an inscribed (resp., superscribed) $3\cdot2^n$-gon. The sequences $\{p_n\},\{P_n\}$ obey the recurrence $$P_{n+1}=\frac{2p_nP_n}{p_n+P_n}, p_{n+1}=\sqrt{p_nP_{n+1}}$$
and both converge to $2\pi$. However this gives no closed formula.

One of the major breakthrough in studying of $\pi$ was made by Euler, Swiss-born (Basel) German-Russian mathematician. In 1735, in his Saint-Petersburg Academy of Science paper, he calculated ({\it literally}) the right hand side of
\begin{equation}
\label{eq_euler}
\sum_{n=1}^{\infty} \frac{1}{n^2} = \frac{\pi^2}{6}.
\end{equation}

Euler's idea was to use the identity $$1-\frac{z}{6}+\dots=\frac{\sin(z)}{z}=\prod_{n=1}^\infty (1-\frac{z}{n^2 \pi^2}),$$
where the first equality is the Taylor series and the second equality happens because these two functions have the same set of zeroes. Equating the coefficient behind $z$ we get \eqref{eq_euler}. This reasoning was not justified until Weierstrass, but there appeared many other proofs. A nice exercise to get \eqref{eq_euler} is by considering the residues of $\frac{\cot(\pi z)}{z^2}$.

We would like to mention here a rather elementary geometric proof of \eqref{eq_euler} which is contained in [Cauchy, Cours d'Analyse, 1821, Note VIII].

\medskip

\begin{minipage}{0.3\textwidth}
\begin{tikzpicture}[scale=2]
\draw (1,0) arc (0:90:1);
\draw (0,0)--(1,0)--(1,0.53)--cycle;
\draw (1,0)--(0.88,0.47)--(0.88,0);
\draw (0,0)--(0.88,0.47)--(0.68,0.91) --cycle;
\draw (0.3,0.08) node{$\alpha$};
\draw (0.29,0.24) node{$\alpha$};
\end{tikzpicture}
\end{minipage}
\begin{minipage}{0.6\textwidth}
Let $\alpha=\frac{\pi}{2m+1}$. Let us triangulate the disk as shown in the picture. Then $\alpha$, the area of each segment, is bound by $\sin\alpha$ and $\operatorname{tg} \alpha$. Therefore $\cot^2\alpha\leq \frac{1}{\alpha^2}\leq \csc^2\alpha$. Writing $\frac{\sin ((2m+1)x)}{(\sin x)^{2m+1}}$ as a polynomial in $\cot x$ and using the fact that $\frac{\pi r}{2m+1}$ are the roots of this polynomial, through Vieta's Theorem we can find the sum of $\cot^2\alpha$ and $\csc^2\alpha$ for $\alpha=\frac{\pi r}{2m+1}, r=1,\dots, m$.
\end{minipage}

\medskip

 So, the above  geometric consideration gives a two-sided estimate for $\frac{1}{\pi^2}\sum_{n=1}^{m} \frac{1}{n^2}$ whose both sides converge to $\frac{1}{6}$ as $m\to\infty$.

\section{$SL(2,\ZZ)$-way to cut corners}
Recall that $SL(2,\ZZ)$ is the set of matrices 
$\begin{pmatrix} a & b \\ c& d \end{pmatrix}$ with $a,b,c,d\in\ZZ$ and $ad-bc=1$. With respect to matrix multiplication, $SL(2,\ZZ)$ is a group. We may identify such a matrix with the pair $(a,b),(c,d)\in\ZZ^2$ of lattice vectors such that the area of the parallelogram spanned by them is one.

\begin{definition}
A vector $v\in\ZZ^2$ is {\it primitive} if its coordinates are coprime. A polygon $P\subset \RR^2$ is called unimodular if 
\begin{itemize} 
\item the sides of $P$ have rational slopes; 
\item two primitive vectors in the directions of every pair of adjacent sides of $P$ give a basis of $\ZZ^2.$
\end{itemize}
\end{definition}
Note that a polygon's property of being unimodular is $SL(2,\ZZ)$-invariant. 

\begin{example}
The polygons $P_0$ and $P_1$ in Figure \ref{fig_octagon} are unimodular.
\end{example}

Let $P_0=[-1,1]^2$ and $D^2$ be the unit disk inscribed in $P_0$, Figure~\ref{fig_octagon}, left. Cutting all corners of $P_0$ by tangent lines to $D^2$ in the directions $(\pm 1,\pm 1)$  results in the octagon $P_1$ in which $D^2$ is inscribed, Figure \ref{fig_octagon}, right.

\begin{remark}
Note that if we cut a corner of $P_0$ by any other tangent line to $D^2$, then the resulting $5$-gon would not be unimodular. 
\end{remark}

\begin{definition}
For $n\geq 0$, the unimodular polygon $P_{n+1}$ circumscribing $D^2$ is defined to be the result of cutting all $4(n+1)$ corners of $P_n$ by tangent lines to $D^2$ in such a way that $P_{n+1}$ is a unimodular polygon.
\end{definition}

Note that passing to $P_{n+1}$ is unambiguous, because each unimodular corner of $P_n$ is $SL(2,\ZZ)$-equivalent to a corner of $P_0$ and the only possibility to unimodularly cut a corner at the point $(1,1)\in P_0$ is to use the tangent line to $D^2$ of the direction $(-1,1)$.

\begin{example}
The primitive vector $(1,1)$ is orthogonal to a side $S$ of $P_1$, belongs to the positive quadrant, and goes outside $P_1$. Two vectors orthogonal to the neighboring to $S$ sides of $P_2$ are $(2,1)$ and $(1,2)$.
 \end{example}

Let $Q$ be a corner of $P_n$. Let $v_1$ and $v_2$ be the primitive vectors orthogonal to the sides of $P_n$ at $Q$, pointing outwards. Then this corner is cut by the new side of $P_{n+1}$ orthogonal to the direction $v_1+v_2.$ Thus, we start with four vectors $(1,0),(0,1),(-1,0),(0,-1)$ --- the outward directions for the sides of $P_0$. To pass from $P_n$ to $P_{n+1}$ we order by angle all primitive vectors orthogonal to the side of $P_n$ and for each two neighbor vectors $v_1,v_2$ we cut the corresponding corner of $P_n$ by the tangent line to $D^2$, orthogonal to $v_1+v_2$. 
In particular, every tangent to $D^2$ line with rational slope contains a side of $P_n$ for $n$ large enough.

We can reformulate the above observation as follows:
\begin{lemma}
\label{lemma_all}
For all $a,b,c,d\in \ZZ_{\geq 0}$ with $ad-bc=1,$ such that $(a,b),(c,d)$ belong to the same quadrant, there is a corner of $P_n$ for some $n\geq 0$ supported by the primitive vectors $(a,b)$ and $(c,d).$ In $P_{n+1}$ this corner is cropped by the line orthogonal to $(a+c,b+d)$ and tangent to $D^2.$ 
\end{lemma}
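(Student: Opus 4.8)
The plan is to read $(a,b)$ and $(c,d)$ as the two outward primitive normals of the corner in question and to prove the existence assertion by strong induction on the size $N=a+b+c+d$. The last sentence of the statement then requires no separate argument: by the corner-cutting rule recalled just before the lemma, a corner whose outward normals are $v_1,v_2$ is cropped in the next polygon by the tangent line to $D^2$ orthogonal to $v_1+v_2$, so here it is cropped orthogonally to $(a,b)+(c,d)=(a+c,b+d)$. The whole content is therefore that every admissible pair actually occurs as a corner of some $P_n$; this is the mediant (Stern--Brocot) structure of the construction, which I would make self-contained as follows.

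First I would fix orientation and conventions. Since $ad-bc=1>0$, the vector $(c,d)$ lies counterclockwise from $(a,b)$, so the pair is a genuine angle-ordered adjacent pair, matching the way normals are listed around $P_n$. For the base case $N=2$ I would check that the only nonnegative integer solution of $ad-bc=1$ with $a+b+c+d=2$ is $(a,b)=(1,0)$, $(c,d)=(0,1)$; this is exactly the first-quadrant corner of $P_0=[-1,1]^2$, whose sides $x=1$ and $y=1$ carry the normals $(1,0)$ and $(0,1)$. Hence the statement holds at $n=0$.

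For the inductive step I would produce a \emph{parent} corner of strictly smaller size whose cut creates the given corner. There are two candidate reductions: replace the pair either by $(a,b),(c-a,d-b)$ or by $(a-c,b-d),(c,d)$. A one-line determinant check shows each candidate again satisfies the unimodular relation, since $a(d-b)-b(c-a)=ad-bc=1$ and likewise $(a-c)d-(b-d)c=ad-bc=1$; moreover $(a,b)+(c-a,d-b)=(c,d)$ and $(a-c,b-d)+(c,d)=(a,b)$, so in each case our pair is literally one of the two corners obtained by cutting the parent corner along its mediant side. The first reduction keeps all coordinates nonnegative exactly when $c\geq a$ and $d\geq b$, the second exactly when $a\geq c$ and $b\geq d$.

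The crux --- and the only place needing real work --- is to show that for every non-base pair at least one of these reductions is legitimate, i.e.\ the coordinates are never ``crossed''. Setting $a=c+s$, $d=b+t$ in the crossing $a>c$, $d>b$ gives the identity $ad-bc=sb+ct+st$ with $s,t\geq 1$, which forces $b=c=0$ and $s=t=1$, namely the base pair; and the opposite crossing $c>a$, $b>d$ yields $bc\geq(d+1)(a+1)>ad$, contradicting $ad-bc=1$. Thus any non-base pair reduces to a strictly smaller unimodular pair, which by the induction hypothesis is supported by a corner of some $P_m$, and cutting that corner in $P_{m+1}$ produces exactly the corner we wanted. I expect this crossed-coordinate case analysis to be the main obstacle, while the reduction identities and the bookkeeping that ``a cut creates its two children'' are routine.
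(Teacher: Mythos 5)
Your proposal is correct and follows the same route as the paper: the paper derives the lemma directly from the observation that the corner-cutting procedure inserts the mediant $v_1+v_2$ between angular-neighbor normals, which is precisely the Stern--Brocot structure you invoke. Your induction on $a+b+c+d$ with the two parent reductions and the crossed-coordinate analysis (the identity $ad-bc=sb+ct+st$ forcing the base pair, and $bc\geq(a+1)(d+1)>ad$ in the other case) is the standard, correct formalization of the completeness claim that the paper leaves implicit.
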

The following lemma can be proven by direct computation.

\begin{lemma}
\label{lemma_cropped} 
In the above notation, the area of the cropped triangle is ${1\over 2}f(a,b,c,d)^2.$
\end{lemma}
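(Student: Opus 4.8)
The plan is to write down explicit equations for the three lines bounding the cropped triangle and to compute its area directly; the condition $ad-bc=1$ will make all the auxiliary quantities collapse. First I would record that a line tangent to $D^2$ with outward primitive normal $v$ is exactly $\{x\in\RR^2 : \langle x,v\rangle = |v|\}$, since $D^2$ has radius $1$ and is centered at the origin. Hence the two sides of $P_n$ meeting at the corner $Q$ lie on $ax+by=\sqrt{a^2+b^2}$ and $cx+dy=\sqrt{c^2+d^2}$, while by Lemma \ref{lemma_all} the cropping side of $P_{n+1}$ lies on $(a+c)x+(b+d)y=\sqrt{(a+c)^2+(b+d)^2}$. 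Writing $p=\sqrt{a^2+b^2}$, $q=\sqrt{c^2+d^2}$, $r=\sqrt{(a+c)^2+(b+d)^2}$, so that $f(a,b,c,d)=p+q-r$, the cropped triangle has vertices $Q$ (intersection of the first two lines), $A$ (first line meeting the cropping line) and $B$ (second line meeting the cropping line).

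Each of these three vertices is the solution of a $2\times2$ linear system whose coefficient matrix has determinant $ad-bc=1$, $a(b+d)-b(a+c)=ad-bc=1$, or $c(b+d)-d(a+c)=bc-ad=-1$ respectively; thus Cramer's rule produces each vertex with no denominators beyond $\pm1$. Solving, I obtain $Q=(dp-bq,\,aq-cp)$ together with similar closed expressions for $A$ and $B$. The heart of the computation is then to form the edge vectors $A-Q$ and $B-Q$: after the cancellations forced by the shared entries, the differences simplify dramatically to $A-Q=f\cdot(b,-a)$ and $B-Q=f\cdot(-d,c)$. In other words each edge vector is $f$ times the primitive direction of the corresponding side, which is precisely the geometric manifestation of the claimed identity.

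Given this, the area is immediate: it equals $\tfrac12\,\bigl|\det[\,A-Q,\ B-Q\,]\bigr| = \tfrac12\,f^2\,|bc-ad| = \tfrac12 f(a,b,c,d)^2$, using $ad-bc=1$ one last time. As a consistency check one sees $|A-Q|=fp$ and $|B-Q|=fq$, while the angle $\theta$ between the two sides satisfies $pq\sin\theta=|bc-ad|=1$, so that $\tfrac12|A-Q|\,|B-Q|\sin\theta=\tfrac12 f^2 pq\sin\theta=\tfrac12 f^2$ independently.

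I do not expect a genuine obstacle here; the difficulty is entirely bookkeeping. One must keep the signs straight in Cramer's rule, confirm that the three lines actually bound a triangle with $D^2$ lying on the far side of the cropping line (which is exactly the content of the construction preceding Lemma \ref{lemma_all}), and check that $f>0$ so the length interpretation above is valid. This positivity is just the strict triangle inequality for the nondegenerate triangle with legs $(a,b)$, $(c,d)$ and hypotenuse $(a+c,b+d)$, which holds because $ad-bc=1$ forces $(a,b)$ and $(c,d)$ to be non-parallel.
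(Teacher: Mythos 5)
Your computation is correct and is exactly the ``direct computation'' the paper invokes without detail: the edge vectors of the cropped triangle are $f\cdot(b,-a)$ and $f\cdot(-d,c)$, whose determinant $f^2|bc-ad|=f^2$ gives the area $\tfrac12 f(a,b,c,d)^2$. Since the paper supplies no further argument, your proposal simply fills in the intended bookkeeping, including the correct tangent-line equations $\langle x,v\rangle=|v|$ and the unit determinants from $ad-bc=1$.
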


We are going to prove that taking the limits of the lattice perimeters and areas of $P_n$ produces our formulae in the abstract. The next lemma is obvious.

\begin{lemma}
$\lim_{n\rightarrow\infty}{\operatorname{Area}}(P_n)={\operatorname{Area}}(D^2)$, $\lim_{n\rightarrow\infty}{\operatorname{Perimeter}}(P_n)=2\pi$.
\label{lem_convpn}
\end{lemma}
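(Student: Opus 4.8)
The plan is to treat the $P_n$ as a nested family of convex bodies shrinking onto the disk and to read off both limits from that. First I would record the structural facts the construction guarantees: every side of every $P_n$ lies on a line tangent to $D^2$, so each $P_n$ is a convex polygon circumscribing $D^2$; and since $P_{n+1}$ is obtained from $P_n$ only by slicing off corners, we have the nested chain $D^2\subseteq\cdots\subseteq P_{n+1}\subseteq P_n\subseteq\cdots\subseteq P_0$. In particular $\operatorname{Area}(P_n)\ge\operatorname{Area}(D^2)=\pi$, and by monotonicity of perimeter for nested convex sets $\operatorname{Perimeter}(P_n)\ge\operatorname{Perimeter}(D^2)=2\pi$, for every $n$. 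Both limits then follow from the single claim that the family exhausts the disk, i.e. $\bigcap_{n\ge 0}P_n=D^2$, together with the quantitative statement that the outward normal directions of the sides of $P_n$ become dense on the circle as $n\to\infty$.

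Next I would establish $\bigcap_n P_n=D^2$. The inclusion $\supseteq$ is immediate from circumscription. For $\subseteq$, take any $x$ with $|x|>1$; the tangent line to the circle at the radial projection $x/|x|$ strictly separates $x$ from $D^2$, and since $|x|>1$ is an open condition this separation persists for every tangent line whose point of tangency is sufficiently close to $x/|x|$. The directions of primitive integer vectors are dense in $S^1$, so among these nearby tangent lines there is one of rational slope; by the observation already made in the text, that line is a side of $P_n$ for all large $n$, whence $x\notin P_n$ and thus $x\notin\bigcap_n P_n$. The same density argument shows that for every $\varepsilon>0$ the angular gaps between consecutive tangent points of $P_n$ are all below $\varepsilon$ once $n$ is large, since a finite $\varepsilon$-net of rational directions eventually all occur as normals.

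With these in hand the area limit is the easy step: the sets $P_n\setminus D^2$ form a decreasing sequence of finite-measure sets with empty intersection, so continuity of Lebesgue measure from above gives $\operatorname{Area}(P_n\setminus D^2)\to 0$, i.e. $\operatorname{Area}(P_n)\to\pi=\operatorname{Area}(D^2)$. For the perimeter I would use the elementary formula for a polygon circumscribing the unit circle: if the tangent points subtend consecutive central gaps $\delta_1,\dots,\delta_k$ with $\sum_i\delta_i=2\pi$, then each corner contributes two tangent segments of length $\tan(\delta_i/2)$, so
\[
\operatorname{Perimeter}(P_n)=\sum_{i}2\tan(\delta_i/2).
\]
Since $\tan(t)\ge t$ this re-proves $\operatorname{Perimeter}(P_n)\ge 2\pi$, while $2\tan(\delta_i/2)-\delta_i=O(\delta_i^3)$ yields $0\le\operatorname{Perimeter}(P_n)-2\pi\le C(\max_i\delta_i)^2\sum_i\delta_i\to 0$ using the gap bound from the previous step. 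This last, non-monotone, estimate is the only part requiring genuine work: area converges by pure measure-theoretic monotonicity, but the perimeter is merely decreasing and bounded below by $2\pi$, so monotonicity alone does not force the limit to be $2\pi$; one really must feed in that the normal directions equidistribute finely enough for the $\tan$ overshoot to vanish. (Alternatively one could invoke continuity of area and perimeter on convex bodies in the Hausdorff metric, but the hands-on estimate above is shorter and self-contained.)
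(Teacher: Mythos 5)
Your proof is correct, but there is nothing in the paper to compare it with step by step: the authors introduce this statement with ``The next lemma is obvious'' and give no proof whatsoever. Your write-up is therefore a genuine completion rather than a parallel route, and it is a sound one. The skeleton is exactly what the construction implicitly provides: each $P_n$ circumscribes $D^2$, the chain is nested, and the paper's own observation that every rational-slope tangent line eventually contains a side of $P_n$ gives both $\bigcap_n P_n=D^2$ (via density of primitive directions, as in your separation argument) and the decay of the maximal angular gap between tangent points. You are also right to isolate where the real content lies: the area limit is pure measure theory (continuity from above applied to the decreasing sets $P_n\setminus D^2$ with empty intersection), whereas the perimeters merely decrease and are bounded below by $2\pi$, so monotonicity alone proves nothing, and one must feed in equidistribution of the tangent points. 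Your identity $\operatorname{Perimeter}(P_n)=\sum_i 2\tan(\delta_i/2)$ together with $2\tan(\delta/2)-\delta=O(\delta^3)$ does this cleanly; note as a small bonus that the cutting procedure only ever splits existing gaps, so $\max_i\delta_i$ is non-increasing from its initial value $\pi/2$, which makes the $\tan$ formula valid for every $n$ and the constant in your $O(\delta^3)$ estimate uniform without any extra argument. The alternative you mention --- continuity of area and perimeter on convex bodies under Hausdorff convergence, with $P_n\to D^2$ following from $\bigcap_n P_n=D^2$ and nestedness --- is presumably the one-line justification the authors had in mind when calling the lemma obvious; your hands-on version has the merit of making the perimeter step, the only delicate one, explicit.
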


\begin{figure}
\begin{tikzpicture}
\draw(0,0)node{$\ $};

\begin{scope}[xshift=30]
\draw (2,2) circle (2cm);
\draw(0,0)--++(4,0)--++(0,4)--++(-4,0)--++(0,-4);
\end{scope}

\begin{scope}[xshift=200]
\draw (2,2) circle (2cm);
\draw(0,2.82842)--++(4-2.82842,4-2.82842)--++(2*2.82842-4,0)--++(4-2.82842,-4+2.82842)--++(0,-2*2.82842+4)--++(-4+2.82842,-4+2.82842)--++(-2*2.82842+4,0)--++(-4+2.82842,4-2.82842)--++(0,2*2.82842-4);
\end{scope}

\end{tikzpicture}
\caption{The disc is inscribed in the square $P_0$. Then, $P_1$ is the only unimodular octagon circumscribing $D^2$ which can be obtained by corner cuts of $P_0.$}
\label{fig_octagon}
\end{figure}
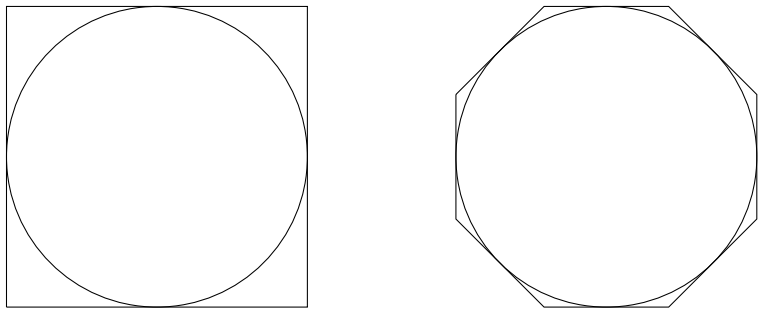

\section{Proofs}

The area of the intersection of $P_0\setminus D^2$ with the first quadrant is $1-\frac{\pi}{4}$. Therefore, it follows from Lemmata~\ref{lem_convpn},~\ref{lemma_cropped} that  $$\sum_{a,b,c,d} {1\over 2}f(a,b,c,d)^2 =1-\frac{\pi}{4},$$
which proves the second formula in the abstract.

\begin{definition}
Let $v$ be a primitive vector. We define the lattice length of a vector $kv, k\in\RR_{\geq 0}$ to be $k$.
\end{definition}
In other words, the length is normalized in each direction in such a way that all primitive vectors have length one. Note that the lattice length is $SL(2,\ZZ)$-invariant. 

The lattice perimeter of $P_n$ is the sum of the lattice lengths of its sides. For example, the usual perimeter of the octagon $P_1$ is $8\sqrt 2-4$ and the lattice perimeter is $2\sqrt 2+4.$ 

\begin{lemma}
\label{lemma_perim}
The lattice perimeter of $P_n$ 
\begin{itemize}
\item tends to zero as $n\to\infty$;
\item is given by $4\big(2-\sum f(a,b,c,d)\big),$ where the sum runs over $a,b,c,d\in\ZZ_{\geq 0}, ad-bc =1$, $(a,b)$ and $(c,d)$ are orthogonal to a pair of neighbor sides of some $P_k$ with $k\leq n$.
\end{itemize}
\end{lemma}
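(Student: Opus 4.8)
The plan is to reduce both assertions to a single local computation describing one corner cut, and then to combine these locally: a telescoping sum yields the explicit formula, while a soft compactness-type estimate yields the vanishing of the lattice perimeter. Throughout write $L_n$ for the lattice perimeter of $P_n$. A side of $P_n$ with primitive outward normal $w=(p,q)$ is a segment of the tangent line to $D^2$ at $w/|w|$, its direction is the primitive vector $(-q,p)$ of Euclidean length $|w|$, and hence its lattice length equals its Euclidean length divided by $|w|$.

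First I would carry out the computation at a single corner. By Lemma~\ref{lemma_all}, a corner $Q$ of $P_n$ is supported by primitive normals $v_1=(a,b)$ and $v_2=(c,d)$ with $ad-bc=1$, and it is cropped in $P_{n+1}$ by the tangent line with normal $v_1+v_2=(a+c,b+d)$. Each of $Q$, and the two new vertices $A$ (on the $v_1$-side) and $B$ (on the $v_2$-side), is the intersection of two tangent lines $\langle x,u\rangle=|u|$; since the relevant pairs of normals have determinant $\pm1$, Cramer's rule gives closed forms for $Q,A,B$. A direct simplification then yields
$$A-Q=f(a,b,c,d)\,(b,-a),\quad B-Q=f(a,b,c,d)\,(-d,c),\quad B-A=f(a,b,c,d)\,(-(b+d),a+c).$$
Each direction vector here is primitive, so all three segments $QA$, $QB$, $AB$ have lattice length exactly $f(a,b,c,d)$. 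In other words, cropping this corner deletes a segment of lattice length $f(a,b,c,d)$ from each of the two adjacent sides and inserts one new side of the same lattice length, for a net change of $-f(a,b,c,d)$ in the lattice perimeter.

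Next I would assemble the formula. One computes $L_0=8$ (four sides, each of lattice length $2$). Summing the per-corner net change over all corners of $P_n$ and using the fourfold symmetry of the construction, under which the total over all corners equals four times the total over those corners whose normals lie in $\ZZ_{\geq 0}^2$, gives the recursion $L_{n+1}=L_n-4\sum f$, the inner sum running over the first-quadrant corners of $P_n$. Telescoping from $L_0=8$, and invoking Lemma~\ref{lemma_all} to identify the corners cropped in forming $P_1,\dots,P_n$ with exactly those admissible quadruples $a,b,c,d\in\ZZ_{\geq 0}$, $ad-bc=1$, occurring as neighbor normals of some $P_k$ with $k\leq n-1$, yields $L_n=4\big(2-\sum f(a,b,c,d)\big)$, the asserted formula.

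Finally, for the vanishing $L_n\to0$ I would argue as follows, and I expect this to be the main obstacle, since the identity $\sum f=2$ may not be used here (it is the intended consequence, obtained by passing to the limit in the formula). Because the $P_n\subseteq P_0$ are nested convex bodies, the Euclidean perimeter satisfies $\operatorname{Perimeter}(P_n)\leq\operatorname{Perimeter}(P_0)=8$. Writing $L_n=\sum_w \ell_w/|w|$ over the sides $w$ of $P_n$, where $\ell_w$ is the Euclidean length, and fixing a threshold $M$, the contribution of sides with $|w|>M$ is at most $8/M$. The remaining sides have normals among the finitely many primitive vectors with $|w|\leq M$; for each such fixed normal $w$, as $n\to\infty$ the two neighbors of $w$ in $P_n$ are ever-finer mediants whose norms tend to infinity, so the angular gaps at the two ends of the $w$-side shrink to $0$ and its Euclidean length decreases monotonically to $0$. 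Hence this finite head tends to $0$, giving $\limsup_{n\to\infty}L_n\leq 8/M$ for every $M$, and therefore $L_n\to0$. Combining the two bullets then recovers $\sum f(a,b,c,d)=2$, the first formula of the abstract.
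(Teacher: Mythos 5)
Your proof is correct and essentially matches the paper's own argument: the paper likewise obtains the second bullet from the cropping bookkeeping (your explicit Cramer's-rule corner computation, showing each cropped corner changes the lattice perimeter by $-f(a,b,c,d)$, merely fills in what the paper compresses into ``follows from the cropping procedure''), and it proves the first bullet by exactly your head/tail split --- the tail over normals $|w|>M$ is bounded by the Euclidean perimeter divided by $M$, while for each of the finitely many fixed normals with $|w|\le M$ the Euclidean length of the corresponding side tends to zero. One point in your favor: your indexing $k\le n-1$ for the cropped pairs is the correct one (as the check $L_1=4\sqrt2=4\big(2-f(1,0,0,1)\big)$ confirms), whereas the lemma's literal ``$k\le n$'' is off by one --- harmless for the limit $n\to\infty$, which is all the paper ultimately uses.
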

\begin{proof}
The second statement follows from the cropping procedure. To prove the first statement we note that for each primitive direction $v$ the length of the side of $P_n$, parallel to $v$, tends to $0$ as $n\to\infty$. The usual perimeter of $P_n$ is bounded (and tends to $2\pi$), and in the definition of the lattice length we divide by the lengths $|v|$ of the primitive directions $v$ for the sides of $P_n$. 

Therefore, for each $N>0$, the sum of the lattice lengths of the sides of $P_n$ parallel to $v$ with $|v|<N$ tends to zero, and the rest part of the lattice perimeter of $P_n$ is less than $\frac{2\pi}{N}$, which concludes the proof by letting $N\to\infty$.
\end{proof}
Finally, we deduce the first equality in the abstract from Lemmata~\ref{lemma_all},~\ref{lemma_perim}.

\section{Questions}
One may ask what happens for other powers of $f(a,b,c,d).$ There is a partial answer in degree $3$, which also reveals the source of our formulae.
 
For every primitive vector $w$ consider a tangent line to $D^2$ consisting of all points $p$ satisfying $w\cdot p+|w|=0.$ Consider a piecewise linear function $F:D^2\rightarrow \RR$ given by 
\begin{equation}
\label{eq_f}
F(p)=\inf_{w\in \ZZ^2\backslash{0}}(w\cdot p+|w|).\end{equation}

Performing {\it verbatim} the analysis of cropped tetrahedra applied to the graph of $F$ one can prove the following lemma.

\begin{lemma}
$4-2\sum f(a,b,c,d)^3=3\int_{D^2} F.$
\end{lemma}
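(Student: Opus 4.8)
The plan is to express $\int_{D^2}F$ as a limit of volumes under a decreasing sequence of polyhedral ``tents'' and to charge the volume lost at each corner cut to a cropped tetrahedron, in exact parallel with the planar argument behind $\sum\tfrac12 f^2=1-\tfrac\pi4$. Writing $\ell_w(p)=w\cdot p+|w|$, we have $F=\inf_w\ell_w$, and the infimum is attained on primitive $w$ (for $w=kw'$ one has $\ell_w=k\,\ell_{w'}\ge\ell_{w'}$ on $D^2$). Let $F_n=\inf\ell_w$ over the finitely many $w$ that are, up to sign, outward normals of the sides of $P_0,\dots,P_n$. Each $F_n$ is concave and piecewise linear, its positivity locus is exactly $\operatorname{int}P_n$ (because $\ell_{-v}(p)>0$ cuts out the inner half-plane of the side with outward normal $v$), and $F_n=0$ on $\partial P_n$. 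The $F_n$ decrease pointwise to $F$ and are dominated by $F_0$, so $\int_{P_n}F_n\to\int_{D^2}F$.

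First I would compute the base term: $F_0(p)=1-\max(|x|,|y|)$ is the pyramid over $P_0=[-1,1]^2$ of height $1$, with $\int_{P_0}F_0=\tfrac13\cdot 4\cdot 1=\tfrac43$; this is the source of the constant $4=3\int_{P_0}F_0$. For the inductive step, fix a corner of $P_n$ supported by primitive outward normals $n_1=(a,b)$ and $n_2=(c,d)$, cropped in $P_{n+1}$ by the tangent line of normal $n_1+n_2$, and let $\ell_1,\ell_2,\ell_3$ be the affine functions vanishing on the two old sides and on the new one. The identity
\[\ell_3=\ell_1+\ell_2-f(a,b,c,d)\]
holds because $|n_1+n_2|$ enters $\ell_3$ whereas $|n_1|,|n_2|$ enter $\ell_1,\ell_2$. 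In the planar coordinates $(u,v)=(\ell_1,\ell_2)$, which are area preserving precisely because $|\det(n_1,n_2)|=1$, the solid removed from under the dome is
\[\{(u,v,z):0\le z\le\min(u,v),\ z\ge u+v-f(a,b,c,d)\},\]
the tetrahedron with vertices $(0,0,0),(f,0,0),(0,f,0),(f,f,f)$ (abbreviating $f=f(a,b,c,d)$), of volume $\tfrac16 f(a,b,c,d)^3$.

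Summing over every corner of every $P_n$, Lemma~\ref{lemma_all} guarantees that each admissible $(a,b,c,d)$ in a given quadrant occurs exactly once, and the fourfold symmetry of the picture multiplies the first-quadrant total by $4$; the total volume lost is therefore $4\sum\tfrac16 f^3=\tfrac23\sum f(a,b,c,d)^3$, with the sum ranging over the first quadrant as in the abstract (convergence is clear from $\sum f^2<\infty$ and boundedness of the $f$). Combining with the base term yields $\int_{D^2}F=\tfrac43-\tfrac23\sum f(a,b,c,d)^3$, and multiplying by $3$ gives the asserted $4-2\sum f(a,b,c,d)^3=3\int_{D^2}F$.

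I expect the main obstacle to be the exact identification of the removed solid. In the plane the lost region is just the cropped triangle, but here the new face $\ell_3$ also depresses the dome over a thin strip of $P_{n+1}$, so the removed solid protrudes beyond the cropped triangle and is not a priori the stated tetrahedron. The clean reduction rests on the two observations above --- the additive identity for $\ell_3$ and the area-preserving character of $(u,v)=(\ell_1,\ell_2)$, which is exactly where unimodularity of the corner enters --- after which the volume collapses to the universal integral $\iint_{[0,f]^2}\bigl(\min(u,v)-(u+v-f)^+\bigr)\,du\,dv=\tfrac16 f^3$. A secondary, routine point is justifying the interchange of the infinite summation with the limit defining $\int_{D^2}F$, which the monotone convergence noted above supplies.
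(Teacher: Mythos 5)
Your proposal is correct and is precisely the paper's intended argument: the paper disposes of this lemma in one line (``performing \emph{verbatim} the analysis of cropped tetrahedra applied to the graph of $F$''), and your tent functions $F_n$, the identity $\ell_3=\ell_1+\ell_2-f$, the unimodular coordinates $(u,v)=(\ell_1,\ell_2)$, and the cropped tetrahedron of volume $\tfrac16 f^3$ with base term $3\int_{P_0}F_0=4$ are exactly that analysis carried out in detail. You even correctly flag and resolve the one genuinely delicate point (the removed solid protruding beyond the cropped triangle), which the paper leaves implicit.
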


 \begin{figure}[h]
\begin{minipage}{0.6\textwidth}
\includegraphics[width=\linewidth]{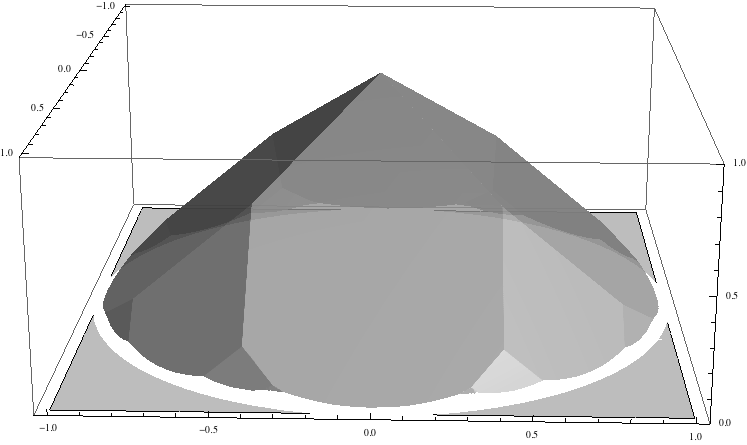}
\end{minipage}
\hfill%
\begin{minipage}{0.37\textwidth}\raggedleft
\includegraphics[width=\linewidth]{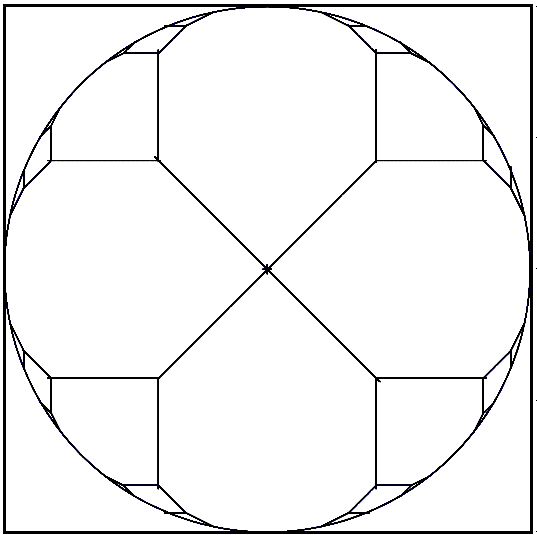}
\end{minipage}
\caption{The plot of $F$ and its corner locus (tropical analytic curve) $C$ for a disc.}
\label{fig_circlecurve}
\end{figure}

Now we describe the general idea behind the formulas.
 Denote by $C\subset D^\circ$ the locus of all points $p$ where the function $F$ is not smooth. The set $C$ is a locally finite tree (see Figure \ref{fig_circlecurve}). In fact, it is naturally a tropical curve (see \cite{us,announce}). The numbers $f(a,b,c,d)$ represent the values of $F$ at the vertices of $C$ and can be computed from the equations of tangent lines. 

Below we list some direction which we find interesting to explore. 
 
{\bf Coordinates on the space of compact convex domains.} For every compact convex domain $\Omega$ we can define $F_\Omega$ as the infimum of all support functions with integral slopes, exactly as in \eqref{eq_f}. Consider the values of $F_{\Omega}$ at the vertices of $C_\Omega$, the corner locus of $F_\Omega$. These values are the complete coordinates on the set of convex domains, therefore the characteristics of $\Omega$, for example, the area, can be potentially expressed in terms of these values. How to relate these coordinates of $\Omega$ with those of the dual domain $\Omega^*$?  

{\bf Higher dimensions.} We failed to reproduce this line of arguments ``by cropping'' for three-dimensional bodies, but it seems that we need to sum up by all quadruples of vectors $v_1,v_2,v_3,v_4$ such that $\mathrm{ConvHull}(0,v_1,v_2,v_3,v_4)$ contains no lattice points. 

{\bf Zeta function.} We may consider the sum $\sum f(a,b,c,d)^\alpha$ as an analog of the Riemann zeta function. This motivates a bunch of questions. What is the minimal $\alpha$ such that this sum converges? We can prove that $\frac{2}{3}<\alpha_{min}\leq 1$. This problem boils down to evaluating the sum $\sum \frac{1}{(|v||w||v+w|)^\alpha}$ by all pairs of primitive lattice vectors $v,w$ in the first quadrant such that the area of the parallelogram spanned by them is one. Can we extend this function for complex values of $\alpha$?

{\bf Other proofs.} It would be nice to reprove our formulae with other methods which are used to prove \eqref{eq_euler}. Note that the vectors $(a,b),(c,d)$ can be uniquely reconstructed by the vector $(a+c,b+d)$ and our construction reminds the Farey sequence a lot. Can we interpret $f(a,b,c,d)$ as a residue of a certain function at $(a+b)+(c+d)i$? The Riemann zeta function is related to integer numbers, could it be that $f$ is related to the Gauss integer numbers?

{\bf Modular forms.} We can extend $f$ to the whole $SL(2,\ZZ)$. If both vectors $(a,b),(c,d)$ belong to the same quadrant, we use the same definition. For $(a,b),(c,d)$ from different quadrant we could define $$f(a,b,c,d)=\sqrt{a^2+b^2}+\sqrt{c^2+d^2}-\sqrt{(a-c)^2+(b-d)^2}.$$ Then $$\sum\limits_{m\in SL(2,\ZZ)}f(m) = \sum\limits_{\substack {a,b,c,d\in \ZZ \\ ad-bc=1}}f(a,b,c,d)$$ is well defined. Can we naturally extend this function to the $\mathbb C/SL(2,\ZZ)$? Can we make similar series for other lattices or tessellations of the plane?





\subsection{Aknowledgement} 
 We would like to thank an anonymous referee for the idea to discuss the Euler formula,  
 and also Fedor Petrov and Pavol \v Severa for fruitful discussions. We want to thank the God and the universe for these beautiful formulae.



\end{document}